\newtheorem{defi}{Definition}[section]
\newtheorem{prop}[defi]{Proposition}
\newtheorem{theo}[defi]{Theorem}
\theoremstyle{remark}
\begin{document}
\title[Power commutator groups]{Power commutator groups}

\author[Arkadius Kalka]{Arkadius Kalka}

\address{Arkadius Kalka, 44328 Dortmund, Germany}
\email{arkadius.kalka@rub.de}

\begin{abstract}
We consider the class of finitely generated groups whose
relators are powers of commutators of the generators. This class contains as a
small subclass graph groups (also called RAAGs), namely if all powers are one.
Graph groups are the only torsionfree groups in this class.
The generators are of infinite order, but we may also add torsion by assigning
arbitrary orders to the generators. Then the above mentioned small subclass
contains partially commutative Shephard groups.
We show that these groups embed into Coxeter groups as finite index subgroups,
thus establishing the linearity of these groups.
The very short proof requires only elementary methods in combinatorial group
theory.
\end{abstract}

\subjclass[2010]{20F55, 20F36}

\keywords{Coxeter group, graph group, Shephard group, Artin group}

\maketitle


\section{Reidemeister-Schreier method}
Here we review briefly the Reidemeister-Schreier method.
Let $H$ be a subgroup of $G=\langle X \mid \mathcal{R} \rangle $, and let
$\mathcal{T}$ be  the \emph{Schreier transversal}  of right coset representatives of $H$ in $G$. 
 The \emph{Schreier map} $\gamma : \mathcal{T} \times X \longrightarrow H$ is defined by 
\[\gamma (t,x)=tx\cdot (\overline{tx})^{-1}. \] 
 Then we get the subgroup presentation $H=\langle \mathcal{Y} \mid \mathcal{R} \rangle $ with
 \emph{Schreier generator set} 
$\mathcal{Y}=\{ \gamma (t,x) \mid t \in \mathcal{T}, x\in X \}$ and
relator set $\mathcal{R}(H)=\{ \tau (trt^{-1}) \mid t\in \mathcal{T}, r\in \mathcal{R} \}$. 
Here  $\tau : F(X) \longrightarrow F(\mathcal{Y})$ denotes the \emph{Reidemeister rewrite map}
 which maps any freely reduced word $x_1x_2 \cdots x_l \in F(X)$ to a word over $\mathcal{Y}$ by
\[ \tau (x_1\cdots x_l)=\gamma(1,x_1)\cdot \gamma (\overline{x_1},x_2) \cdots \gamma (\overline{x_1\cdots x_{l-1}},x_l). \]

\section{Power commutator groups}
\begin{defi}
A \emph{power commutator (pc) group} is a f.g. group whose relators are powers of commutators of the generators, i.e of the form
$[g_i,g_j]^{n_{ij}}$ with $n_{ij} \in\{1,2,\ldots ,\infty \}$.
A \emph{finite order (f.o.) power commutator group} is a pc group with additional relations $g_i^{p_i}=1$ for all generators $g_i$ 
with $p_i \in \mathbb{N}\cup \{\infty \}$.
\end{defi}

{\bf Examples:} \,\, (1) \emph{Graph groups} (aka \emph{RAAGs} of \emph{partially commutative gps})
are the only torsionfree pc groups. In this case we have $n_{ij}\in \{1,2,\ldots \infty \}$ and $p_i=\infty $ for all $i\ne j$. \\
(2) If all powers (of commutators) are $1$ or $\infty $ 
then our f.o. pc group becomes a \emph{partially commutative Shephard group}. \\
(3) Quotients of fundamental groups of surfaces.

\begin{theo} \label{MainTh}
F.o. pc groups embed into Coxeter groups.
\end{theo}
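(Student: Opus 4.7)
The plan is to exhibit $G$ as a finite-index subgroup of an explicit Coxeter group $W$ and identify it using the Reidemeister-Schreier machinery from Section 1. For each generator $g_i$ of $G$, introduce two Coxeter involutions $a_i, b_i$, so that $W$ has $2n$ generators, with Coxeter matrix
\[
m(a_i, b_i) = p_i,\qquad m(a_i, a_j) = 2\,n_{ij}\ (i\ne j), \qquad m(a_i, b_j) = m(b_i, a_j) = m(b_i, b_j) = 2\ (i \ne j).
\]
Thus the $b_i$ commute pairwise and each $b_i$ commutes with every $a_j$ with $j \ne i$; only the matched pair $(a_i, b_i)$ and the cross-pair $(a_i, a_j)$ may fail to commute. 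The candidate embedding is $\phi : G \to W$, $g_i \mapsto a_i b_i$.

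The first task is to verify $\phi$ is well-defined. The relation $(a_i b_i)^{p_i} = 1$ follows from $m(a_i, b_i) = p_i$. For the commutator relation, the central identity in $W$, obtained by pushing $b$'s past $a$'s via the three commutations $b_i \leftrightarrow a_j$, $b_i \leftrightarrow b_j$, $a_i \leftrightarrow b_j$ and collapsing with $b_i^2 = b_j^2 = 1$, is
\[
[a_i b_i,\, a_j b_j] \;=\; a_i b_i a_j b_j \cdot b_i a_i b_j a_j \;=\; (a_i a_j)^2.
\]
Since $m(a_i, a_j) = 2 n_{ij}$, we obtain $(a_i a_j)^{2 n_{ij}} = 1$, hence $[g_i, g_j]^{n_{ij}} = 1$ in $W$.

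Next, consider the surjection $W \twoheadrightarrow (\mathbb{Z}/2)^n$ sending both $a_i$ and $b_i$ to the basis vector $e_i$; it is well-defined because every defining relator of $W$ has even multiplicity in each index. Let $K$ be its kernel, an index-$2^n$ subgroup of $W$ containing $\phi(G)$. Take the Schreier transversal $\mathcal{T} = \{b_S : S \subseteq \{1,\dots,n\}\}$, where $b_S$ is the product of the $b_i$ with $i \in S$ in increasing order; this set is closed under taking prefixes because the $b_i$ pairwise commute. Using these commutations a direct calculation gives the Schreier generators
\[
\gamma(b_S, a_i) \;=\; \begin{cases} a_i b_i = g_i & \text{if } i \notin S,\\ b_i a_i = g_i^{-1} & \text{if } i \in S, \end{cases} \qquad \gamma(b_S, b_i) \;=\; 1,
\]
so $K$ is generated by $g_1, \ldots, g_n$ and, in particular, $\phi(G) = K$.

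The main obstacle is the verification that the rewritten Coxeter relators $\tau(t r t^{-1})$ for $t \in \mathcal{T}$ and $r$ a defining relator of $W$ collapse to exactly the pc-relations (so that the Reidemeister-Schreier presentation of $K$ coincides with the given presentation of $G$, making $\phi$ injective). The squares $a_i^2, b_i^2$ and the commutation relators $(a_i b_j)^2, (b_i a_j)^2, (b_i b_j)^2$ rewrite trivially, because consecutive $\gamma$'s cancel once the $b$-letters are absorbed into the transversal; the order relators $(a_i b_i)^{p_i}$ rewrite to conjugates of $g_i^{\pm p_i}$; and the commutator relators $(a_i a_j)^{2 n_{ij}}$ rewrite to conjugates of $[g_i, g_j]^{\pm n_{ij}}$ via the identity $(a_i a_j)^2 = [a_i b_i, a_j b_j]$ established above. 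Each rewrite is an elementary but bookkeeping-heavy calculation controlled by the commutation structure, and together they yield $G \cong K \le W$ as a finite-index (index $2^n$) subgroup.
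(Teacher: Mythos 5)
Your proposal is correct and follows essentially the same route as the paper: your Coxeter group on the $2n$ involutions $a_i,b_i$ (with $m(a_i,a_j)=2n_{ij}$, $m(a_i,b_i)=p_i$, and all other entries $2$) is exactly the paper's even Coxeter group $W''$ on $S\cup R$, your surjection onto $(\mathbb{Z}/2)^n$ is the paper's $\theta$, and the Schreier transversal, generators $\gamma(b_S,a_i)=(a_ib_i)^{\pm 1}$, and rewriting of the relators match the paper's computation. The only cosmetic difference is that you first check well-definedness of $g_i\mapsto a_ib_i$ and then identify the image with $\ker\theta$, whereas the paper directly computes a presentation of $\ker\theta$ and recognizes it as the f.o.\ pc group.
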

\begin{proof}
 Let $W=W(D)$ be an \emph{even} Coxeter group with vertex set $S=(s_i)_{i\in I}$ ($|I|=n$), i.e.
its Coxeter diagram $D$ is given by a Coxeter matrix $(m_{ij})_{i,j\le n}$ with $m_{ij} \mod 2 \equiv 0$. 
Let $R$ be a copy of $S$.
Define $W''=W''(D'')$ with $Vert(D'')=R\cup S$, 
 and we add relations $(s_ir_i)^{p_i}$ $\forall i$ with $p_i \in \{2,3,\ldots , \infty \}$. 
Consider the homo $W'' \stackrel{\theta}{\longrightarrow \hspace{-16pt} \longrightarrow} \mathbb{Z}^n_2$ 
induced by $\theta : r_i, s_i \rightarrow r_i$.
Then $\ker \theta$ is a f.o. pc group, generated by $\{a_i\}_{i\in I}$, $a_i=s_ir_i$, 
with $[a_i,a_j]^{m_{ij}/2}=1$ and $a_i^{p_i}=1$ $\forall i,j$.
It remains to perform the explicit computation of $\ker \theta $.  \\
The Schreier transval for $\ker \theta$ is $\mathcal{T}=\{ \prod _{j \in J} r_j \mid J\subseteq I\}$ with 
$|\mathcal{T}|=2^n$. Evaluation of the Schreier map gives
$\gamma (t,r_i)=1$ $\forall (t,i)\in \mathcal{T}\times I$, and
 \[\gamma (t,s_i)=\left\{ \begin{array}{ll} s_ir_i=:a_i, & i\notin J, \\
                                         r_is_i=a_i^{-1}, & i\in J.
									       \end{array} \right., \]
i.e. we get as Schreier generator set $\mathcal{Y}=\{a_i\}_{i\in I}$. 
Consider the relations $r=(s_is_k)^{m_{ik}}$. First assume $i,k\notin J$ then 
\[\tau (trt^{-1})=(\gamma (t,s_i)\gamma (tr_i,s_k) \gamma (tr_ir_k,s_i)\gamma (tr_i,s_k))^{m_{ik}/2}=
(a_ia_ka_i^{-1}a_k^{-1})^{m_{ik}/2}. \] 
Similarly we get $\tau(r_itrt^{-1}r_i)=[a_i,a_k^{-1}]^{m_{ik}/2}$, $\tau (r_ktrtr_k)=[a_i^{-1},a_k]^{m_{ik}/2}$, 
and $\tau (r_ir_k(trt)r_ir_k)=[a_i,a_k]^{m_{ik}/2}$. Furthermore,
\[ \tau((s_ir_i)^{p_i})=(\gamma (1,s_i)\gamma (r_i,r_i))^{p_i}=(a_i\cdot 1)^{p_i}=a_i^{p_i}, \] 
and it remains to check that
$\tau(s_i^2)=\gamma (1,s_i)\gamma (r_i,s_i)=a_ia_i^{-1}=1$.
\end{proof}

{\bf Remark:}  
Graph groups (with $n_{ij}\in \{1, \infty \}$) embed into right angled Coxeter groups (with $m_{i,j}=2n_{ij}\in \{2,\infty \}$). This was shown by Davis and Januskiewicz \cite{DJ00}
using the action on cubical complexes. But the proof using the classical Reidemeister Schreier method is much shorter. \\

\section{Other embeddings into Coxeter groups}
\subsection{Coxeter groups}
We may also consider another natural homomorphism from $W''$ to an abelian group.
\begin{prop} \label{CoxEmb} 
Now, let $W=W(D)$ be any Coxeter group with vertex set $S=(s_i)_{i\in I}$
and index set $I$ of cardinality $|I|=n$. 
Define $W''$ as in the proof of Theorem \ref{MainTh} by attaching an abelian group $\langle R \rangle$ to $W(D)$,
but here we only allow \emph{even} $p_i$ for all $i$.
Consider the homo $\varphi : W'' \longrightarrow \hspace{-16pt} \longrightarrow \mathbb{Z}_2^n$  
defined by $r_i\mapsto r_i$  and
$s_i\mapsto 1$. 
Let $T=\{t_i=r_is_ir_i\}_{i\in I}$. Then
\[\ker \varphi =\langle S\cup T \mid s_i^2, t_i^2, (s_it_i)^{p_i/2}, (s_is_j)^{m_{ij}}, (s_it_j)^{m_{ij}},(t_it_j)^{m_{ij}} \, \, \forall i\ne j \rangle \] is also a Coxeter group which we denote by $W'$.
\end{prop}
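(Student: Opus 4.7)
The approach parallels the proof of Theorem \ref{MainTh}: apply the Reidemeister-Schreier method of Section 1 to the surjection $\varphi : W'' \twoheadrightarrow \mathbb{Z}_2^n$. Since $\varphi$ kills every $s_i$ and sends the $r_i$ to a basis of $\mathbb{Z}_2^n$, a Schreier transversal is again $\mathcal{T} = \{\prod_{j \in J} r_j \mid J \subseteq I\}$, of cardinality $2^n$.

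The Schreier map is then quickly evaluated. As the $r_i$ pairwise commute and are involutions, $\gamma(t,r_i)=1$ for every $(t,i)$. For $\gamma(t,s_i)$, the fact $s_i \in \ker\varphi$ gives $\overline{ts_i}=t$, so $\gamma(t,s_i)=ts_it^{-1}$; using the edges of $D''$ between $R$ and $S$, namely $r_j s_i = s_i r_j$ for $j\ne i$, this conjugate collapses to $s_i$ when $i \notin J$ and to $r_i s_i r_i = t_i$ when $i \in J$. Hence the Schreier generating set is exactly $\mathcal{Y}=S\cup T$.

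Next I would rewrite each defining relator of $W''$ for every $t \in \mathcal{T}$. The relators $r_i^2$, $(r_ir_j)^2$ and $(r_is_j)^2$ rewrite to either the empty word or to $s_j^2$/$t_j^2$, producing no new information. The relator $s_i^2$ rewrites to $s_i^2$ or $t_i^2$ according as $i \notin J$ or $i \in J$. For a braid $(s_is_j)^{m_{ij}}$ with $i\ne j$, the coset stays at $t$ throughout (both letters lie in $\ker\varphi$), and each occurrence of $s_k$ rewrites to $s_k$ or $t_k$ depending on whether $k \in J$; the four cases for $J \cap \{i,j\}$ yield exactly the four braids $(s_is_j)^{m_{ij}}$, $(t_is_j)^{m_{ij}}$, $(s_it_j)^{m_{ij}}$ and $(t_it_j)^{m_{ij}}$. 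Finally, $(s_ir_i)^{p_i}$ is handled as in Theorem \ref{MainTh}: tracing through the word, every four-letter block $s_ir_is_ir_i$ contributes $s_i t_i$ (or its cyclic conjugate $t_i s_i$), and since $p_i$ is even, pairing consecutive blocks gives precisely $(s_it_i)^{p_i/2}=1$. This is the one place the evenness hypothesis on $p_i$ is required.

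The main obstacle is the bookkeeping for arbitrary $t \in \mathcal{T}$: one must verify that the extra factors $r_k$ with $k \notin \{i,j\}$ merely pass through the rewriting unchanged, because $r_k$ commutes with both $s_i$ and $s_j$ in $W''$, and therefore contribute no fresh relations. Once this is confirmed, the Reidemeister-Schreier procedure delivers exactly the presentation listed in the proposition. That this presentation is Coxeter is then visible by inspection: each element of $S \cup T$ is an involution, and every remaining relation has the form $(xy)^m=1$ with symmetric exponent $m = p_i/2$ for the pair $(s_i,t_i)$ and $m = m_{ij}$ for every other pair of distinct generators.
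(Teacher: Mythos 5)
Your proposal is correct and follows essentially the same route as the paper: the Reidemeister--Schreier method with the transversal $\mathcal{T}=\{\prod_{j\in J}r_j\mid J\subseteq I\}$, the evaluation $\gamma(t,s_i)=s_i$ or $t_i$ according as $i\notin J$ or $i\in J$, and the same case analysis rewriting the relators $(s_is_j)^{m_{ij}}$, $s_i^2$, and $(s_ir_i)^{p_i}$ into the stated Coxeter presentation. The only difference is that you spell out a few routine verifications (the commuting $r_k$ factors, the evenness of $p_i$) that the paper leaves implicit.
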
 

\begin{proof} The proof is a straightforward computation using the Reimeister-Schreier method:
The Schreier transversal for $\ker \theta$ is $\mathcal{T}=\{ \prod _{j \in J} r_j \mid J\subseteq I\}$ with 
$|\mathcal{T}|=2^n$. Evaluation of the Schreier map gives
$\gamma (t,r_i)=1$ $\forall (t,i)\in \mathcal{T}\times I$, and
 \[\gamma (t,s_i)=\left\{ \begin{array}{ll} s_i, & i\notin J, \\
                                         r_is_ir_i=t_i, & i\in J.
									       \end{array} \right., \]
i.e. we get Schreier generating set $\mathcal{Y}=\{s_i, t_i\}_{i\in I}$. 
Consider the relations $r=(s_is_k)^{m_{ik}}$. Then 
\[\tau (trt^{-1})=1\cdot (\gamma (t,s_i)\gamma (t,s_k))^{m_{ik}} \cdot 1=
\left\{ \begin{array}{ll} (s_is_k)^{m_{ik}}, & i,k \notin J,\\
                          (s_it_k)^{m_{ik}}, & i \notin J, k\in J,\\
													(t_is_k)^{m_{ik}}, & i\in J, k\notin J, \\
													(t_it_k)^{m_{ik}}, & i,k \in J.
         \end{array} \right. \] 
Furthermore,
\[ \tau((s_ir_i)^{p_i})=(\gamma (1,s_i)\gamma (1,r_i)\gamma (r_i,s_i)\gamma (r_i,r_i))^{p_i/2}=(s_it_i)^{p_i/2}, \] 
and 
$\tau(ts_i^2t^{-1})=(\gamma (t,s_i))^2=\left\{ \begin{array}{ll} s_i^2, & i \notin J, \\
                                                                 t_i^2, & i\in J. 
																			          \end{array} \right. $.
\end{proof}

Proposition \ref{CoxEmb} yields a lot of examples of embeddings of Coxeter groups, e.g., \\ 
$W(\tilde{A}_3)$ is an index 4 subgroup of $W(\tilde{C}_3)=W(\tilde{A}_3) \rtimes \mathbb{Z}_2^2$.

\subsection{Klein bottle group}

Recall the fundamental group $\pi _1({\rm Klein} \,\, {\rm bottle})=\langle a,b \mid a=bab \rangle $, and denote
by $D_{\infty }$ the infinite dihedral group.
\begin{prop}
$\pi _1({\rm Klein} \,\, {\rm  bottle})$ embeds into an right angled Coxeter group. More precisely,
We have the following split exact sequence: 
\[ \pi _1({\rm Klein} \,\, {\rm bottle}) \hookrightarrow D_{\infty }^2
\stackrel{4}{\longrightarrow \hspace{-16pt} \longrightarrow} \mathbb{Z}_2^2, \]
i.e $D_{\infty }^2= \mathbb{Z}_2^2 \ltimes \pi _1({\rm Klein} \,\, {\rm bottle})$. 
\end{prop}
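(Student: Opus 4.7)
The plan is to mirror the proof of Theorem~\ref{MainTh}: present $D_{\infty}^2$ as the right-angled Coxeter group
\[
D_\infty^2 = \langle s_1, r_1, s_2, r_2 \mid s_i^2,\ r_i^2,\ [s_1,s_2],\ [s_1,r_2],\ [r_1,s_2],\ [r_1,r_2]\rangle,
\]
and apply the Reidemeister-Schreier method to a surjection $\varphi\colon D_{\infty}^2\twoheadrightarrow\mathbb{Z}_2^2$ whose kernel will turn out to be $\pi_1({\rm Klein\,\,bottle})$. I would define $\varphi$ by $s_1\mapsto e_1$, $r_1\mapsto e_1$, $s_2\mapsto e_2$, $r_2\mapsto e_1+e_2$. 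Each defining relator of $D_\infty^2$ maps to $0$, so $\varphi$ is well-defined; moreover $\langle s_1, s_2\rangle\cong\mathbb{Z}_2^2$ is a section of $\varphi$, supplying the splitting $D_\infty^2 = \mathbb{Z}_2^2 \ltimes \ker\varphi$ as soon as the kernel has been identified.

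With Schreier transversal $\mathcal{T}=\{1, s_1, s_2, s_1 s_2\}$, I would first compute the Schreier generators: $\gamma(t, s_i)=1$ for every $t\in\mathcal{T}$ and $i\in\{1,2\}$, while the eight values $\gamma(t, r_i)$ reduce, via the four cross-commutations and $s_i^2 = r_i^2 = 1$, to just two elements (up to inversion),
\[
\beta := r_1 s_1, \qquad \alpha := r_2 s_2 s_1.
\]
Next I would push each defining relation through the Reidemeister rewrite $\tau$ at each $t\in\mathcal{T}$. The pure-$s$ relations and the cross-commutators $[s_1, r_2], [r_1, s_2]$ trivialize because $\gamma(\cdot, s_i)=1$, and the squares $r_i^2$ collapse to $\alpha\alpha^{-1}$ or $\beta\beta^{-1}$. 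The only genuinely new input is $[r_1, r_2] = 1$: tracking cumulative coset representatives gives
\[
\tau(r_1 r_2 r_1 r_2) = \gamma(1, r_1)\,\gamma(s_1, r_2)\,\gamma(s_2, r_1)\,\gamma(s_1 s_2, r_2) = \beta\,\alpha\,\beta\,\alpha^{-1},
\]
so one obtains the relation $\beta\alpha\beta = \alpha$, i.e.\ exactly the Klein-bottle relation $a = bab$ under $a = \alpha$, $b = \beta$. The conjugates obtained from the other three $t$'s reduce to the same relation, so $\ker\varphi = \langle\alpha,\beta\mid \alpha = \beta\alpha\beta\rangle = \pi_1({\rm Klein\,\,bottle})$.

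The main (minor) obstacle is the Schreier bookkeeping in the second step — verifying that the a priori eight Schreier generators coming from $r_1$ and $r_2$ really collapse to $\{\alpha^{\pm1}, \beta^{\pm1}\}$ using only the RAAG cross-commutations and involutivity. Once this is done, the computation of the $[r_1, r_2]$-rewrite produces the Klein-bottle relation essentially for free.
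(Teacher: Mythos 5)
Your proposal is correct and follows essentially the same route as the paper: a Reidemeister--Schreier computation for a "twisted" surjection $D_\infty^2 \twoheadrightarrow \mathbb{Z}_2^2$ whose kernel is generated by two elements subject only to the relation $\beta\alpha\beta\alpha^{-1}$. In fact your homomorphism coincides with the paper's $\theta'$ up to the automorphism of $D_\infty^2$ swapping $r_i \leftrightarrow s_i$ and relabeling the two factors, so the bookkeeping is the same; your explicit section $\langle s_1,s_2\rangle$ is a small bonus the paper leaves implicit.
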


\begin{proof}
Consider the direct product of two copies of infinite dihedral gps,
$D_{\infty }^2=X_{i=1,2}\langle r_i, s_i \mid r_i^2, s_i^2 \rangle $. 
Define the homomorphism $\theta' : D_{\infty}^2 \longrightarrow \mathbb{Z}_2^2$ by 
$s_1 \mapsto r_1r_2$, $s_2\mapsto r_2$, $r_i \mapsto r_i$ for $i=1,2$. Then the
Schreier generators of $\ker \theta' $ are $a_i=\gamma (1,s_1)$,  i.e. $a:=a_1=(s_1r_1)r_2$ and $b:=a_2=s_2r_2$. 
It turns out that the Reidemeister-Schreier process yields  (up to conjugation) only one nontrivial relator, namely 
\[ \tau((s_1s_2)^2)=\gamma (1,s_1)\gamma (r_1r_2,s_2)\gamma (r_1,s_1)\gamma (r_2,s_2)=ab^{-1}a^{-1}b^{-1}. \]  
Finally, we check for $i\ne j$ that $\tau ((s_ir_j)^2)=\gamma (1,s_i)\gamma (r_i,r_j)\gamma (r_ir_j,s_i)\gamma (r_j,r_j)=1$.
Hence, we have shown that $\ker \theta '=\langle a,b \mid a=bab \rangle $.
\end{proof}

\subsection{Artin groups}
We do not know whether there exists - besides graph groups - any other Artin groups that embed into Coxeter groups.
This seems to be open even for the braid group $\mathcal{B}_3$ or for the simplest $B$-type Artin group $\mathcal{A}(B_2)$.
But given some notion of generalized Coxeter groups, namely defined by labelled hypergraphs rather than labelled graphs,
one may embed Artin and Shephard groups into these generalized Coxeter groups. 

\begin{prop} 
Artin groups embed into a group generated by involutions. In particular, they embed into some generalized Coxeter groups. 
\end{prop}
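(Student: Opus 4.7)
The approach I would take is to realize $\mathcal{A}=\mathcal{A}(D)$ as an index-two normal subgroup of a semidirect product $G := \mathcal{A} \rtimes \mathbb{Z}_2$, where the generator $\iota$ of $\mathbb{Z}_2$ acts by inverting each standard Artin generator, $\iota(a_i) = a_i^{-1}$. The point of this choice is that it forces the elements $\iota \cdot a_i$ to be involutions, which together with $\iota$ itself will generate $G$ by involutions while keeping $\mathcal{A}$ safely embedded as a subgroup.

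The first step is to verify that the rule $a_i \mapsto a_i^{-1}$ extends to an involutive automorphism of $\mathcal{A}$. Applied to a braid relation $\underbrace{a_ia_ja_i\cdots}_{m_{ij}} = \underbrace{a_ja_ia_j\cdots}_{m_{ij}}$ it produces $\underbrace{a_i^{-1}a_j^{-1}a_i^{-1}\cdots}_{m_{ij}} = \underbrace{a_j^{-1}a_i^{-1}a_j^{-1}\cdots}_{m_{ij}}$, and a quick case-check on the parity of $m_{ij}$ shows that this is exactly what one obtains by inverting both sides of the original braid relation; since obviously $\iota^2 = \mathrm{id}$ on generators, $\iota$ is an involutive automorphism and $G$ is well-defined. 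Next, setting $r_i := \iota \cdot a_i$, I would compute
\[ r_i^2 \;=\; (\iota a_i)(\iota a_i) \;=\; (\iota a_i \iota)\, a_i \;=\; a_i^{-1} a_i \;=\; 1, \]
so each $r_i$ is an involution; together with $\iota^2=1$ this yields $G = \langle \iota, r_1, \ldots, r_n \rangle$ generated by $n+1$ involutions, with the Artin generators recovered as $a_i = \iota r_i$. The embedding $\mathcal{A} \hookrightarrow G$ is then automatic from the semidirect-product construction.

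For the "in particular" clause, the braid relation transforms under $a_i = \iota r_i$ into an equation $\underbrace{\iota r_i \iota r_j \iota r_i \cdots}_{2m_{ij}} = \underbrace{\iota r_j \iota r_i \iota r_j \cdots}_{2m_{ij}}$ among the three involutions $\iota, r_i, r_j$, and together with $\iota^2 = r_i^2 = 1$ this presents $G$ by involutions subject to relations naturally encoded by labelled hyperedges of size three, fitting the hypergraph framework of generalized Coxeter groups alluded to in this section. The only genuinely non-formal step in the whole argument is the automorphism check for $\iota$; thereafter the semidirect product, the involutive generators, and the hypergraph reformulation are all routine, so there is no serious obstacle to overcome.
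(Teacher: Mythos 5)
Your proof is correct, but it takes a genuinely different route from the paper. The paper attaches to each Artin generator its own involution: it builds $W''=\langle R\cup S\mid r_i^2,s_i^2,[r_i,r_j],[r_i,s_j],w(i,j)w(j,i)^{-1}\rangle$ with $2n$ involutive generators, maps it onto $\mathbb{Z}_2^n$ by $r_i,s_i\mapsto r_i$, and identifies $\mathcal{A}(D)$ with the kernel (index $2^n$) via an explicit Reidemeister--Schreier computation, exactly parallel to the construction in Theorem~\ref{MainTh}. You instead use a single extra involution: the inversion map $a_i\mapsto a_i^{-1}$ is an automorphism of every Artin group (your parity check on the braid relations is the right verification, and $\iota^2=\mathrm{id}$ gives invertibility), so $G=\mathcal{A}\rtimes\langle\iota\rangle$ exists, the elements $r_i=\iota a_i$ are involutions, and $\mathcal{A}$ sits inside $G=\langle\iota,r_1,\dots,r_n\rangle$ with index $2$. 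Rewriting the standard presentation of the semidirect product in the generators $\iota,r_i$ does yield $\iota^2=r_i^2=1$ together with relations supported on the triples $\{\iota,r_i,r_j\}$, so your group is also presented by involutions with hyperedge relations, which is all the (deliberately loose) notion of generalized Coxeter group in this section demands. What each approach buys: yours is more economical --- $n+1$ involutions, index $2$, and no rewriting process, since the presentation of a semidirect product is standard --- while the paper's keeps the hypergraph local (each relation $w(i,j)w(j,i)^{-1}$ involves only the generators attached to the edge $\{i,j\}$ of the diagram, whereas all of your hyperedges share the global vertex $\iota$) and runs in strict parallel with the $\mathbb{Z}_2^n$-covering construction used for f.o.\ pc groups and for Proposition~\ref{CoxEmb}. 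Both arguments establish the stated proposition.
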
 
\begin{proof} Let $D$ be a Coxeter diagram corresponding to the Coxeter matrix $M=(m_{ij})_{i,j\in I}$.
As usually we have a copy $R$ of $S=\{s_i\}_{i\in I}$.
Defining words $w(i,j):=s_ir_is_jr_js_ir_i \cdots $ over $R\cup S$ with $|w|=2m_{ij}$ allows us to construct the following
generalized Coxeter group:
\[ W''=\langle R \cup S \mid r_i^2,s_i^2, [r_i,r_j], [r_i,s_j], w(i,j)w(j,i)^{-1} \,\, \forall i\ne j \rangle .\]  
Consider the homo $\theta :W'' \longrightarrow \hspace{-16pt} \longrightarrow \mathbb{Z}_2^n$ 
defined by $r_i,s_i \mapsto r_i$. 
Then $\ker \theta =\mathcal{A}(D)$ generated by $\{a_i=s_ir_i\}_{i\in I}$. 
It is straightforward to check that the Reidemeister-Schreier process yields (up to conjugacy) only the
following type of relations:
\[ \tau (w(i,j)w(j,i)^{-1})=(a_ia_ja_i\cdots)(a_ja_ia_j\cdots )^{-1}. \]
\end{proof}

{\bf Acknowledgements.}
This work was partially supported by the Emmy Noether Research Institute for
Mathematics and the Minerva Foundation (Germany).

\end{document}